\crefname{equation}{}{}
\newtheorem{theorem}{Theorem}[section]
\newtheorem{lemma}[theorem]{Lemma}
\newtheorem{corollary}[theorem]{Corollary}
\newtheorem*{conjecture*}{Conjecture}
\theoremstyle{definition}
\theoremstyle{remark}
\newtheorem*{remark}{Remark}
\numberwithin{equation}{section}
\newcommand{\R}{\mathbb R}
\newcommand{\N}{\mathbb N}
\DeclareMathOperator{\Tr}{Tr}
\DeclareMathOperator{\Frob}{Frob_{\emph{p}}}
\DeclareMathOperator{\Gal}{Gal}
\DeclareMathOperator{\SL}{SL}
\DeclareMathOperator{\GL}{GL}
\DeclareMathOperator{\Img}{Im}
\newcommand{\E}{\mathcal E}
\newcommand{\C}{\mathbb C}
\newcommand{\Q}{\mathbb Q}
\newcommand{\Z}{\mathbb Z}
\title[Quasimodular Forms that detect primes are Eisenstein]{Quasimodular Forms that detect primes are Eisenstein}
\thanks{2020 {\it{Mathematics Subject Classification.}} {11P81, 11Fxx, 05A17}}
\keywords{quasimodular forms, Eisenstein series, partitions, prime detection}
\author{Jan-Willem van Ittersum, Lukas Mauth, Ken Ono, and Ajit Singh}
\dedicatory{In celebration of Hiroaki Nakamura's 60th birthday}
\address{University of Cologne, Department of Mathematics and Computer Science,
Weyertal 86-90, 50931 Cologne, Germany}
\curraddr{Korteweg--de Vries Institute for Mathematics, University of Amsterdam, Postbus 94248, 1090 GE  Amsterdam, The Netherlands}
\email{j.w.m.vanittersum@uva.nl}
\address{University of Cologne, Department of Mathematics and Computer Science,
Weyertal 86-90, 50931 Cologne, Germany}
\email{lmauth@uni-koeln.de}
\address{Dept. of Mathematics, University of Virginia, Charlottesville, VA 22904}
\email{ko5wk@virginia.edu}
\address{Dept. of Mathematics, University of Virginia, Charlottesville, VA 22904}
\curraddr{Dept. of Mathematics $\&$ Computing, Indian Institute of Technology (Indian School of Mines) Dhanbad, Jharkhand, India}
\email{ajit94@iitism.ac.in}
\begin{document}
\begin{abstract} 
MacMahon's partition functions and their extensions provide equations that identify prime numbers as solutions. These results depend on the theory of (mixed weight) quasimodular forms on $\SL_2(\Z)$. Two of the authors, along with Craig, conjectured an explicit description of the set of prime-detecting quasimodular forms in terms of Eisenstein series and their derivatives. Kane et al.\ recently verified this conjecture using analytic methods. We offer an alternative proof using the theory of $\ell$-adic Galois representations associated to modular forms.
 \end{abstract}

\maketitle

\section{Introduction and Statement of Results} 

 In a recent paper \cite{CVO2024}, two of the authors and Craig found expressions in partition functions that detect the set of prime numbers.
The simplest examples arise from  MacMahon's~\cite{MacMahon} functions
	\begin{align} \label{Ua} 
	\mathcal{U}_a(q)=\sum_{n\geq1}M_a(n)\,q^n:=\sum_{0< s_1<s_2<\cdots<s_a} \frac{q^{s_1+s_2+\cdots+s_a}}{(1-q^{s_1})^2(1-q^{s_2})^2\cdots(1-q^{s_a})^2}.
	\end{align}
	Clearly,
	$M_a(n)$ sums the  \emph{part multiplicity products} of the partitions of~$n$ with $a$ part sizes	\begin{equation}\label{M_def}
	M_a(n)=\sum_{\substack{0<s_1<s_2<\dots<s_a\\
			n=m_1 s_1+m_2s_2+\dots+m_a s_a}} m_1m_2\cdots m_a.
	\end{equation}
For $a\in \{1, 2, 3\},$ Theorem~1 of \cite{CVO2024} offers the two inequalities
\begin{displaymath}
\begin{split}
		&(n^2-3n+2)M_1(n)-8 M_2(n)\geq 0,\\
		&(3n^3 - 13n^2 + 18n - 8)M_1(n) + (12n^2 - 120n + 212)M_2(n) -960M_3(n) \geq 0.
\end{split}
\end{displaymath}
For $n\geq 2,$  these expressions vanish if and only if $n$ is prime.
	
There are infinitely many further prime-detecting expressions when one includes generalizations of MacMahon's $M_a(n),$ where for  a vector $\vec{a} = (v_1, v_2, \dots, v_a) \in \N^a$, we have the {\it MacMahonesque partition function}\footnote{These functions are also considered in Bachmann's master's thesis (see Chapter 5 of~\cite{BachmannMA}).}
\begin{equation}\label{M_general_def}
	M_{\vec{a}}(n):=\sum_{\substack{0<s_1<s_2<\dots<s_a\\m_1,\ldots,m_a>0\\
			n=m_1 s_1+m_2s_2+\dots+m_a s_a}} m_1^{v_1} m_2^{v_2}\cdots m_a^{v_a}.
\end{equation}
Theorem 6 of \cite{CVO2024} establishes the existence of infinitely many prime-detecting expressions in these functions.
Indeed, as $d\rightarrow +\infty,$ there are $\gg d^2$ many linearly independent inequalities of the form
$$
		\sum_{|\vec{a}|\leq d} c_{\vec{a}}\, M_{\vec{a}}(n)\geq 0,
$$
where $|\vec{a}|=v_1+v_2+\dots + v_a$ with $\vec{a}=(v_1, v_2, \dots, v_a)$,
that similarly detects primes. For example, we have the inequality
\begin{displaymath}
\begin{split}
		63M_{(2,2)}(n)-12&M_{(3,0)}(n) 
		-39M_{(3,1)}(n)-12M_{(1,3)}(n)\\
		&+80M_{(1,1,1)}(n)-12M_{(2,0,1)}(n)+12 M_{(2,1,0)}(n) +12 M_{(3,0,0)}(n)\geq 0.
\end{split}
\end{displaymath}

\begin{remark}
Further results on partition prime detection have recently been obtained by a number of authors, such as the works by Craig \cite{Craig}, Gomez \cite{Gomez}, and Kang et al.\ \cite{KMS}.
\end{remark}

All of these results are based on the phenomenon that the Fourier expansions of special (mixed weight) quasimodular forms detect primes. Here we address the problem of explicitly determining these forms. To make this precise, we begin with some necessary preliminaries.
For positive integers $k$ and $\tau\in \mathbb{C}$ with $\Im(\tau)>0,$ the weight $2k$ Eisenstein series (see Ch.~1 of~\cite{CBMS}) is
\begin{equation}\label{Eisenstein}
		G_{2k}(\tau) := -\frac{B_{2k}}{4k}+\sum_{n}\sigma_{2k-1}(n)q^n,
\end{equation}
	where $B_{2k}$ is the  $2k$-th Bernoulli number, $\sigma_{\nu}(n):=\sum_{d\mid n}d^{\nu},$ and $q:=e^{2\pi i \tau}.$ 
	It is well-known (for example, see \cite{Kaneko-Zagier}) that the quasimodular forms are given by polynomial ring  $\widetilde{M}:=\C[G_2,G_4,G_6].$ We also denote by $\widetilde{M}_k$ the subspace of $\widetilde{M}$ of forms with weight $k$, and $\widetilde{M}_{\leq k}$ the space of all quasimodular forms of mixed weight $\leq k$.

We require the differential operator
	\begin{align}
	D := \frac{1}{2\pi i} \dfrac{d}{d\tau} = q \dfrac{d}{dq}.
	\end{align}
	For $m \geq 0$, we have $D^m : q^n \mapsto n^m q^n$.
	Ramanujan proved the identities (for example, see \cite[Section 2.3]{CBMS})\footnote{These identities are often stated for the normalized Eisenstein series $E_k := 1 + \dots$.}
	\begin{align*}
	DG_2 = - 2G_2^2 + \dfrac{5}{6} G_4, \ \ \ D G_4 = -8 G_2 G_4 + \dfrac{7}{10} G_6, \ \ \ DG_6 = -12 G_2 G_6 + \dfrac{400}{7} G_4^2,
	\end{align*}
	which implies that $D$ is a derivation on $\widetilde{M}$, which increases weights by $2$ (i.e., $D:\widetilde{M}_{k}\to \widetilde{M}_{k+2}$).
	
	We determine the quasimodular forms whose Fourier expansions detect primes, a phenomenon that seems to have been first noticed by Leli\`evre \cite{Lelievre}. To this end, we let $\mathcal{E}$ denote the \emph{quasimodular Eisenstein space}, the subspace of quasimodular forms generated additively by the even weight Eisenstein series and their derivatives. We let $\Omega$ denote the {\it prime-detecting quasimodular forms}, where a quasimodular form
	$$
	f =\sum_{n\geq 0} b_n(f)q^n  \in \overline{\Q}[\![q]\!]
	$$
	is prime-detecting if $f\in \R[\![q]\!]$, for positive $n$ we have $b_n(f)\geq 0$, and for $n\geq 2$ vanish  if and only if $n$ is prime. Here we confirm Conjecture~2 of \cite{CVO2024}, which determines $\Omega$.
	
\begin{theorem}\label{Main-Theorem}
Every prime-detecting quasimodular form lies in the
quasimodular Eisenstein space $\mathcal{E}$. Equivalently, we have $\Omega \subset \mathcal{E}$.
\end{theorem}

\begin{remark}
Kane et al.\ recently proved Theorem~\ref{Main-Theorem} using analytic methods \cite{Kane-etal}. Here we offer an alternative proof using the theory of $\ell$-adic Galois representations.
\end{remark}

Thanks to Theorem~\ref{Main-Theorem} and \cite[Theorem~11]{CVO2024}, we obtain an explicit description of the prime-detecting quasimodular forms. For even $k\geq 6$, we define the weight $k$ distinguished quasimodular forms
\begin{align} \label{H definition}
	H_k := \sum_{n\geq 0} b_{n}(H_k) q^n :=
\begin{cases}
	\frac{1}{6}\left((D^2-D+1)G_2-G_4\right) &\ \ \ {\text {\rm if}}\ k=6, \\
	\frac{1}{24}(-D^2 G_{k-6} + (D^2+1) G_{k-4} - G_{k-2}) &\ \ \ {\text {\rm if}}\ k\geq 8.
\end{cases}
\end{align}
	
\begin{corollary}\label{Main-Corollary}
For all $n \geq 0$ and $k\geq 6$, we have $D^n H_k\in \Omega$. Conversely, if $f\in \Omega$, then $f$ is a linear combination of the forms  $D^n H_k$ for $n\geq 0$ and $k\geq 6$.
\end{corollary}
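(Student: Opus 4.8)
The plan is to prove the two assertions of the corollary separately, invoking Theorem~\ref{Main-Theorem} only for the converse. For the forward direction I would first note that $\Omega$ is closed under $D$: if $f\in\Omega$ then $b_m(Df)=m\,b_m(f)$, so $b_m(Df)\ge 0$ for all $m\ge 1$, while for $m\ge 2$ one has $b_m(Df)=0$ if and only if $b_m(f)=0$, i.e.\ if and only if $m$ is prime; since $Df$ is again quasimodular, $Df\in\Omega$. Iterating, $D^n$ maps $\Omega$ into itself, so it suffices to prove $H_k\in\Omega$ for each even $k\ge 6$. This is exactly \cite[Theorem~11]{CVO2024}, which supplies the nonnegativity $b_m(H_k)\ge 0$ together with the exact vanishing at primes, and I would quote it directly. (That $b_p(H_k)=0$ for every prime $p$ is in any case immediate from $\sigma_{2k-1}(p)=1+p^{2k-1}$ and the definition of $H_k$.)

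For the converse, take $f\in\Omega$. By Theorem~\ref{Main-Theorem} we have $f\in\mathcal E$, so $f=\sum_{n,k}c_{n,k}D^nG_{2k}$ is a finite combination of derivatives of Eisenstein series. I would first check that these forms are linearly independent: in any linear relation the weight-$w$ part involves only the terms with $n+k=w/2$, whose low exponents $n\in\{0,\dots,w/2-1\}$ and high exponents $n+2k-1\in\{w/2,\dots,w-1\}$ are pairwise distinct, so each $c_{n,k}$ vanishes; hence the coefficients of $f$ are well defined. The key step is then to linearize prime detection: using $\sigma_{2k-1}(p)=1+p^{2k-1}$, the requirement $b_p(f)=0$ for every prime $p$ reads $P_f(p)=0$, where $P_f(x):=\sum_{n,k}c_{n,k}\bigl(x^{n}+x^{\,n+2k-1}\bigr)$. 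As $P_f$ is a single polynomial vanishing at infinitely many $p$, it is identically zero.

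It remains to describe all coefficient-vectors with $P_f\equiv 0$. I would view each basis form $D^nG_{2k}$ as the edge joining $n$ and $n+2k-1$ in the complete bipartite graph on the even and the odd nonnegative integers (the endpoints have opposite parity because $2k-1$ is odd). Then $P_f\equiv 0$ says precisely that at every vertex the weights $c$ on the incident edges sum to zero, i.e.\ $c$ lies in the kernel of the unsigned incidence matrix; negating the rows of one part turns this into an oriented incidence matrix, whose kernel is the cycle space, spanned by the alternating $4$-cycles, and a telescoping identity reduces these to the unit squares on a consecutive even pair $\{e,e+2\}$ and a consecutive odd pair $\{o,o+2\}$. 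A direct computation identifies each such unit square, up to sign, with a single $D^nH_k$: taking $n=\min(e,o)$ and $k=|e-o|+5\ge 6$, the form $D^nH_k$ has exactly the four pairs of that square, the interlaced case $|e-o|=1$ being covered by the special shape of $H_6$ and the separated cases $|e-o|\ge 3$ by $H_k$ with $k\ge 8$. Therefore $f\in\operatorname{span}\{D^nH_k:n\ge 0,\ k\ge 6\}$, completing the converse.

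The main obstacle is this final identification: one must verify that the explicitly given $H_k$ realize precisely the unit $4$-cycles, so that their $D$-derivatives span the entire kernel, and in particular that the anomalous form of $H_6$ is exactly what covers the adjacent pairs $|e-o|=1$ missed by the $k\ge 8$ family. The positivity input $H_k\in\Omega$ needed for the forward direction is deeper still, but it lies entirely within the cited \cite[Theorem~11]{CVO2024} and need not be reproved here.
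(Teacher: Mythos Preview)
Your proposal is correct, but the route for the converse differs from the paper's. The paper's proof of the corollary is a single sentence: it invokes \cite[Theorem~11]{CVO2024}, which already characterizes $\mathcal{E}\cap\Omega$ in terms of the family $\{D^nH_k\}$, and then applies Theorem~\ref{Main-Theorem} to replace $\mathcal{E}\cap\Omega$ by $\Omega$. Both directions of the corollary are thus delegated wholesale to the cited reference.

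You proceed differently. For the forward direction you argue essentially as the paper does (noting in addition that $D$ preserves $\Omega$, which reduces the citation to the base case $H_k\in\Omega$). For the converse, however, you do \emph{not} cite the characterization of $\mathcal{E}\cap\Omega$; instead you give a self-contained linear-algebraic argument: write $f\in\mathcal{E}$ as $\sum c_{n,k}D^nG_{2k}$, observe that $a_p(f)=0$ for all primes forces the polynomial $P_f(x)=\sum c_{n,k}(x^n+x^{n+2k-1})$ to vanish identically, interpret this as the vanishing of the weighted vertex sums in the complete bipartite graph on even and odd nonnegative integers, identify the kernel with the cycle space, telescope down to unit squares, and finally match each unit square with a specific $D^nH_k$. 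This in fact proves the stronger assertion that every $f\in\mathcal{E}$ with $a_p(f)=0$ for all primes already lies in $\operatorname{span}\{D^nH_k\}$, without using the nonnegativity in $\Omega$. The identification of $H_6$ with the interlaced square $\{0,2\}\times\{1,3\}$ and of $H_k$ ($k\ge 8$) with $\{0,2\}\times\{k-5,k-3\}$ checks out, and the shift by $D^n$ covers all unit squares. So your converse is a genuine, more elementary reproof of the relevant half of \cite[Theorem~11]{CVO2024}, whereas the paper simply cites it; what you gain is transparency of why the $H_k$ are exactly the right generators, at the cost of some extra work.
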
 

To obtain these results, we modify earlier work of the  third author and Skinner \cite{Ono-Skinner}, itself an extension of the
classical work of Ribet
\cite{Ribet1, Ribet3, Ribet2} and Serre \cite{Serre}, which asserts that $\ell$-adic Galois representations associated to modular forms generally have ``large image." The main result we require is Lemma~\ref{Main-Lemma}, which we state and prove in Section~\ref{NutsBolts}. In Section~\ref{Proofs}, we employ it to prove Theorem~\ref{Main-Theorem} and Corollary~\ref{Main-Corollary}.

\section*{Acknowledgements}
We thank Koustav Banerjee and the referee for comments on a previous draft of this paper.
The first author thanks the support of the SFB/TRR 191 ``Symplectic Structure in Geometry, Algebra and Dynamics" grant funded by the Deutsche Forschungsgemeinschaft (Projektnummer 281071066 TRR 191).
The third author thanks the Thomas Jefferson Fund,  the NSF
(DMS-2002265 and DMS-2055118), and the Simons Foundation (SFI-MPS-TSM-00013279).  The  fourth author is grateful for the support of a Nehru-Fulbright postdoctoral fellowship.

\section{A fundamental lemma about mixed weight cusp forms}\label{NutsBolts}

This section is  devoted to the proof  of a fundamental lemma about a finite family of cuspidal Hecke eigenforms of level\footnote{With a slight modification of the hypotheses, the restriction to level 1 is not necessary for the conclusion.} $1$, say
$f_1,\dots, f_v$, whose Fourier expansions are
\begin{equation}\label{CuspForms}
f_i(\tau)=\sum_{n=1}^{\infty} a_{f_i}(n)q^n.
\end{equation}
These cuspidal Hecke eigenforms can have arbitrary weights. For sufficiently large primes~$\ell$, we show $\ell$-adically (see Lemma~\ref{Main-Lemma}) that we have maximal freedom for the congruence properties, simultaneously for the Hecke eigenvalues $a_{f_1}(p), \dots, a_{f_v}(p),$ as the primes $p$ vary in arbitrary arithmetic progressions of the form  $p\equiv r\pmod d,$ where $\gcd(r,d)=1.$

To demonstrate the usefulness of this general result, we consider the following toy situation:  Suppose that $f$ is a prime-detecting weight $2k$ modular form, which decomposes as the sum $f = G_{2k} + g,$ where  $g$ is a weight $2k$ cuspidal Hecke eigenform. Since it is prime-detecting, we have that the coefficients of $g$ with prime index must negate the divisor function arising from~$G_{2k}$. This is a contradiction, as we  then wouldn't have any freedom in the choice of the Hecke eigenvalues $\ell$-adically in arbitrary arithmetic progressions.

The proof of Theorem~\ref{Main-Theorem} is an extrapolation of this toy situation. Indeed,
thanks to a well-known decomposition theorem (for example, see (12) of \cite{Grabner}), every quasimodular form can be decomposed into a part which lies in the quasimodular Eisenstein space $\E$ (which includes derivatives of Eisesntein series) and a cuspidal part (including derivatives of cusp forms) of mixed weight. We will apply Lemma~\ref{Main-Lemma} to the cuspidal component.
If $\ell$ is sufficiently large, then a positive proportion of primes $p$ will realize any given
prescribed set of local congruence conditions $\ell$-adically for the Hecke eigenvalues.
 As a consequence, we will be able to show that the coefficients at certain primes cannot negate any divisor function (or linear combinations and derivatives thereof), which in turn implies Theorem~\ref{Main-Theorem}. This argument will be carried out in detail in Section~3.

This fundamental lemma is a modification of a result by the third author and Skinner on Fourier coefficients of arbitrary integer weight cusp forms on congruence subgroups \cite{Ono-Skinner}, which is based on the theory of Galois representaions attached to newforms developed by Eichler, Shimura, Serre, Deligne, and  Ribet (see for example \cite{Deligne, D-S, Ribet1,Ribet2, Serre}).

 To make this article self-contained, we restate the setup of \cite{Ono-Skinner} in the level {\bf $1$} case. Let  $\overline{\Q}$ be an algebraic closure of $\Q$, and for each rational prime $\ell$, let $\overline{\Q}_\ell$ denote an algebraic closure of the field of $\ell$-adic numbers $ \Q_\ell$. Fix an embedding $\overline{\Q}$ into $\overline{\Q}_\ell$. This determines a decomposition group $ D_\ell$ at $\ell$. In particular, if ${K}$ is a finite extension of ${\Q}$ with ring of integers $\mathcal{O}_{K}$, then for each $\ell$, this fixes a choice of a prime ideal $\mathfrak{p}_{\ell, {K}} \subset \mathcal{O}_{K} $ lying above $\ell$. Let ${F}_{\ell, {K}}$ be the residue field $\mathcal{O}_{K} / \mathfrak{p}_{\ell, {K}}$, and let $ | \cdot |_\ell $ be an extension to $\overline{\Q}_\ell $ of the usual $\ell$-adic absolute value on $\Q_\ell$.
	
Let $ f(\tau) = q+\sum_{n=2}^\infty a_f(n) q^n $ be a cuspidal normalized Hecke eigenform of level 1. Then each $ a_f(n)$ is an algebraic integer (see \cite[Theorem 2.29]{CBMS}), and thus the field generated by the coefficients $ \{a_f(n)\}_{n\geq 1}$ over $\Q $ is a finite extension, denoted by ${K}_f \subset \overline{\Q}$. 
If ${K}$ is any number field containing ${K}_f$, and if $\ell$ is any prime, then by the work of Eichler, Shimura, Deligne, and Serre (see for example \cite{Deligne, D-S,Serre}) there is a continuous, semisimple representation 
\begin{equation}
\rho_{f,\ell}:\Gal(\overline{\Q}/\Q)\rightarrow \GL_2(F_{\ell,{K}})
\end{equation}
for which
\begin{enumerate}[label=$(A_{\arabic*})$]
	\item $\rho_{f,\ell}$ is unramified at all primes $p\not= \ell$;
	
	\item We have that
	$$\Tr(\rho_{f,\ell}(\Frob))\equiv a_f(p) \pmod{ \mathfrak{p}_{\ell, {K}}}
	$$
	 for all primes $p\not= \ell$, where $\Frob$ denotes any Frobenius element for the prime $p$;
	
	\item  $\det \rho_{f,\ell}(\Frob)\equiv p^{k-1} \pmod{ \mathfrak{p}_{\ell, {K}}}$ for all primes $p\not= \ell$;
	
	\item $\det \rho_{f,\ell}(c)=-1$ for any complex conjugation $c$.
\end{enumerate}
 These representations capture essentially all of the properties of the reductions of the $a(n)$'s modulo $\mathfrak{p}_{\ell, K}$.
 For a positive integer $k_*$, we define
  $$\mathcal{N}_{k_*}:=\{f\in S_k(\SL_2(\Z)): f  \ \text{is a normalized Hecke eigenform with $k\leq k_*$}\}.$$ 
  For $f\in\mathcal{N}_{k_*}$, let $G_f\subset \Gal(\overline{\Q}/\Q)$ be the subgroup stabilizing ${K}_f$. For each $\ell$, let 
  $D_{f,\ell}:=G_f\cap D_\ell$ and ${F}_{f,\ell}:={F}_{\ell, {K}}^{D_{f,\ell}}$.
  We consider primes $\ell$ for which the following conditions hold.
  
 \begin{enumerate}[label=$(B_{\arabic*})$]
 	\item ${K}$ is unramified at $\ell$;
 	\item $\ell>2(k_*+2)$ and $\ell$ is odd;
 	\item For $f\in\mathcal{N}_{k_*}$, the representations $\rho_{f,\ell}$ are pairwise nonisomorphic;
 	\item The image of $\rho_{f,\ell}$ contains a normal subgroup $H_f$ conjugate to $\SL_2({F}_{f,\ell}),$ and suppose that $\Img(\rho_{f,\ell})/H_f$ is abelian.
 \end{enumerate}
 \smallskip
 
Condition $(B_4)$ tells us that the representation $\rho_{f,\ell}$ has large image, which is crucial for us to show that the eigenvalues can be freely chosen in the residue field. By the work of Serre~\cite{Serre} and Ribet~\cite{Ribet2},  property~$(B_4)$ holds for all sufficiently large primes $\ell$, as there are only finitely many forms in $\mathcal{N}_{k_*}.$

An important ingredient for the proof is that the cusp form in question is not a form with complex multiplication (a CM-form) in the sense of Ribet \cite{Ribet3}.\footnote{Ribet’s notion implies the more common definition of CM form where the $n$th Fourier coefficient is given as sums of values of Hecke characters over ideals of fixed norm $n.$} We consider cuspidal Hecke eigenforms of level $1$. Since the Hecke operators $T_n$ are self-adjoint, their eigenvalues are totally real and by multiplicativity all the Fourier coefficients of $f_i$ ($i=1,\ldots,v$) are real. For higher level $N > 1$ the space of cusp forms decomposes in a direct sum of spaces of cusp forms with Nebentypus $\varepsilon : (\Z / N\Z)^{\times} \rightarrow \C^{*}$. There is a simple criterion for a Hecke eigenform to have complex multiplication: the number field generated by the coefficients is a CM field unless $\varepsilon$ is trivial or of order two and satisfies $\varepsilon(p)a_{f_i}(p) = a_{f_i}(p)$ for all $p \nmid N$ prime (see for example \cite[Proposition 3.3]{Ribet3}). Moreover, we have that the absolute discriminant of the CM field divides $N$. As $N=1$, we do not need to consider CM forms. 

 For $v\in\mathbb{N}_0$ and $0 \leq i \leq v$, let $f_i(z) =q+ \sum_{n=2}^{\infty} a_{f_i}(n)q^n$ be the cuspidal Hecke eigenforms
 as in (\ref{CuspForms}), which are included in a set $\mathcal{N}_{k_*}.$
 Let $K_{f_i}\subset\overline{\Q}$ be the field containing all the Fourier coefficients of $f_i$ and $K$ be the compositum field of the $K_{f_i}$ for all $0 \leq i \leq v$. As before, let $\mathfrak{p}_{\ell, {K}} \subset \mathcal{O}_{K} $ be a prime ideal lying above $\ell$ and let ${F}_{\ell,K}$ denote the corresponding residue field.
 Assuming the notations above, we have the following lemma.
 
\begin{lemma}\label{Main-Lemma} If  a prime $\ell$  satisfies {\text {\rm ($B_1$)--($B_4$)}}, then
	for all pairs of integers $0<r<d$, with $\gcd(r,d)=1,$ and any set $w_1,\dots, w_v \in {F}_{\ell,K}$, a positive proportion of primes $p \equiv r \pmod{d}$ satisfies the congruence
	\[
	a_{f_i}(p) \equiv w_i \pmod{\mathfrak{p}_{\ell, K}}
	\]
	simultaneously for $0 \leq i \leq v$.
\end{lemma}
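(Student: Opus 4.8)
The plan is to combine the residual Galois representations $\rho_{f_i,\ell}$ with the mod-$d$ cyclotomic character into one large representation and to read off the conclusion from the Chebotarev density theorem, once we know this combined representation has suitably large image; the large-image input is a modification of the Ribet--Serre theory used in \cite{Ono-Skinner}. First, by $(A_2)$ each $a_{f_i}(p)=\Tr\rho_{f_i,\ell}(\Frob)$ reduces into $F_{f_i,\ell}\subseteq F_{\ell,K}$ (the field generated by the $a_{f_i}(n)\bmod\mathfrak p_{\ell,K}$), so the claim is vacuous unless $w_i\in F_{f_i,\ell}$, and we take the $w_i$ there; after a conjugation we may also assume $\Img(\rho_{f_i,\ell})\subseteq\GL_2(F_{f_i,\ell})$ and $H_{f_i}=\SL_2(F_{f_i,\ell})$. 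Let $\bar\chi_d\colon\Gal(\overline{\Q}/\Q)\to(\Z/d\Z)^\times$ be the mod-$d$ cyclotomic character, so $\bar\chi_d(\Frob)\equiv p\pmod d$ for $p\nmid d$, and set
\[
\Phi:=\bigl(\rho_{f_1,\ell},\dots,\rho_{f_v,\ell},\bar\chi_d\bigr)\colon\Gal(\overline{\Q}/\Q)\longrightarrow\prod_{i=1}^{v}\GL_2(F_{f_i,\ell})\times(\Z/d\Z)^\times.
\]
By $(A_1)$ and finiteness of the image, $\Phi$ factors through $\Gal(L/\Q)\cong\Img(\Phi)$ for a finite Galois $L/\Q$ unramified outside $\ell d$. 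The subset
\[
S:=\bigl\{(g_1,\dots,g_v,r)\in\Img(\Phi):\Tr(g_i)=w_i\text{ for }1\le i\le v\bigr\}
\]
is stable under conjugation, so by Chebotarev the primes $p$ with $\Phi(\Frob)\in S$ have density $|S|/|\Img(\Phi)|$, and by $(A_2)$ every such $p$ (away from the finitely many dividing $\ell d$) satisfies $p\equiv r\pmod d$ together with $a_{f_i}(p)\equiv w_i\pmod{\mathfrak p_{\ell,K}}$ for all $i$. It therefore suffices to show $S\ne\emptyset$.

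To produce an element of $S$ it is enough to prove the large-image statement
\[
N:=\prod_{i=1}^{v}\SL_2(F_{f_i,\ell})\times\{1\}\ \subseteq\ \Img(\Phi).
\]
Indeed, $\bar\chi_d$ is surjective (it realizes $\Gal(\Q(\zeta_d)/\Q)\cong(\Z/d\Z)^\times$), so some $g^{*}=(g_1^{*},\dots,g_v^{*},r)\in\Img(\Phi)$, whence $g^{*}N\subseteq\Img(\Phi)$. For each $i$ the coset $g_i^{*}\SL_2(F_{f_i,\ell})=\{M\in\GL_2(F_{f_i,\ell}):\det M=\det g_i^{*}\}$ contains a matrix of every trace $t\in F_{f_i,\ell}$, for instance $\left(\begin{smallmatrix}0&-\det g_i^{*}\\ 1&t\end{smallmatrix}\right)$; choosing $s_i\in\SL_2(F_{f_i,\ell})$ with $\Tr(g_i^{*}s_i)=w_i$ gives $(g_1^{*}s_1,\dots,g_v^{*}s_v,r)\in g^{*}N\cap S$.

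The large-image statement is the heart of the matter, and here I adapt \cite{Ono-Skinner} (which rests ultimately on \cite{Ribet2, Serre}). Let $\Gamma$ be the image of $(\rho_{f_1,\ell},\dots,\rho_{f_v,\ell})$ in $\prod_i\Img(\rho_{f_i,\ell})$. By $(B_4)$, $H_{f_i}=\SL_2(F_{f_i,\ell})$ is normal in $\Img(\rho_{f_i,\ell})$ with abelian quotient, and since $|F_{f_i,\ell}|\ge\ell>3$ by $(B_2)$ the group $\SL_2(F_{f_i,\ell})$ is perfect, so $H_{f_i}=[\Img(\rho_{f_i,\ell}),\Img(\rho_{f_i,\ell})]$. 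A Goursat-type induction on $v$ then yields $\prod_iH_{f_i}\subseteq\Gamma$: the projection $\Gamma\to\Img(\rho_{f_i,\ell})$ is onto, so $[\Gamma,\Gamma]$ surjects onto each $H_{f_i}$, and the only obstruction to $\prod_iH_{f_i}\subseteq[\Gamma,\Gamma]$ is a $\Gamma$-compatible identification of the simple quotients $\mathrm{PSL}_2(F_{f_i,\ell})$ and $\mathrm{PSL}_2(F_{f_j,\ell})$ for some $i\ne j$; such an identification forces $\rho_{f_i,\ell}$ and $\rho_{f_j,\ell}$ to be projectively isomorphic up to a twist, which in level $1$ (no nontrivial Dirichlet characters, and no CM forms since a CM field's absolute discriminant cannot divide $N=1$) and for $\ell$ large is excluded by $(B_3)$. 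Finally, to incorporate $\bar\chi_d$, set $M=\Img(\Phi)$ with projections $\pi_1\colon M\to\Gamma$ and $\pi_2\colon M\to(\Z/d\Z)^\times$, both onto; since $M/\ker\pi_2\cong(\Z/d\Z)^\times$ is abelian we have $[M,M]\subseteq\ker\pi_2=M\cap(\Gamma\times\{1\})$, while $\pi_1([M,M])=[\Gamma,\Gamma]\supseteq\prod_iH_{f_i}$ and $\pi_1$ is injective on $\ker\pi_2$; hence $(x,1)\in M$ for every $x\in\prod_iH_{f_i}$, i.e.\ $N\subseteq\Img(\Phi)$.

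The step I expect to be the main obstacle is the large-image statement, and within it the Goursat-type independence argument that rules out linked residual images of distinct level-$1$ eigenforms; this is exactly what the Ribet--Serre machinery adapted from \cite{Ono-Skinner} supplies, and it is why the effective bound $(B_2)$ and the non-isomorphism hypothesis $(B_3)$ (together with $(B_1)$ and $(B_4)$) are imposed on the finite family $f_1,\dots,f_v\in\mathcal{N}_{k_*}$. Once that input is in hand, combining it with an arithmetic progression is easy, precisely because $\prod_i\SL_2(F_{f_i,\ell})$ is perfect while $(\Z/d\Z)^\times$ is abelian.
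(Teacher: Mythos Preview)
Your proof is correct and follows essentially the same strategy as the paper's: combine the $\rho_{f_i,\ell}$ with the mod-$d$ cyclotomic character, show the image contains $\prod_i \SL_2\times\{1\}$ via the Goursat/Ono--Skinner argument (where the paper simply cites \cite{Ono-Skinner} for this step and you spell it out), then exhibit an element with the prescribed trace-tuple in the $r$-coset and invoke Chebotarev. One small correction: when $w_i\notin F_{f_i,\ell}$ the conclusion is not ``vacuous'' but in fact false (no primes can satisfy it), so the lemma as literally stated needs this restriction---you are right to impose it, and the paper's own proof glosses over the same point; your perfectness-versus-abelian argument for decoupling $\bar\chi_d$ from $\prod_i\SL_2$ is also marginally cleaner than the paper's direct analysis of $\mathcal H=\ker\varepsilon\cap H$.
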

\begin{proof} Thanks to condition $(B_4)$, we have that the representation 
$$\rho_1 \times \cdots \times \rho_v$$
 contains a normal subgroup  conjugate to $v$ copies of $\SL_2({F}_{\ell,K})$, as is proven in \cite[Lemma~(i)]{Ono-Skinner} by modifying the arguments in \cite{Ribet1,Ribet3,Serre}. 
  Without loss of generality, we assume that
 $$H_1\times\cdots\times H_v=\SL_2({F}_{\ell,K})\times\cdots\times \SL_2({F}_{\ell,K}).
 $$
 Let $\varepsilon: \Gal\!\left(\overline{\mathbb{Q}}/\Q\right) \rightarrow (\Z/d\Z)^{\times}$ be the character defined by $\varepsilon(\sigma) = s\pmod d$, if $\zeta_d^\sigma = \zeta_d^s$. The condition $p \equiv r \pmod{d}$ is equivalent to $\varepsilon(\Frob) = r$. We claim that the image of 
 $$\rho_1\times \cdots \rho_v\times  \varepsilon
 $$
  contains $\mathrm{SL}_2({F}_{\ell,K})\times\cdots \times \mathrm{SL}_2({F}_{\ell,K})\times (\Z/d\Z)^{\times}$. 
  
 To this end, we denote the kernel of the map $\varepsilon$ by $G$ and write $\mathcal{H}:=G\cap H$, where 
 $$H:=\bigcap_{i=1}^v \rho_i^{-1}(H_i).
 $$
As shown in \cite[Lemma (i)]{Ono-Skinner}, we have that $\rho_i(H)=H_i$. We note that $\rho_i(\mathcal{H})\subseteq \rho_i(H)=H_i$. Moreover, observe that since $G$ is normal in $\Gal\!\left(\overline{\mathbb{Q}}/\Q\right)$, we have that $\mathcal{H}$ is normal in $H$. Hence, $\rho_i(\mathcal{H})$ is normal in the image of $H$ under $\rho_i$, which is $\rho_i(H)=H_i$. Now, as $H/\mathcal{H}$ and hence $H_i/\rho_i(\mathcal{H})$ are contained in the image of $\varepsilon$, these quotients are abelian.  As $H_i$ does not contain any non-trivial abelian quotients, we find that $\rho_i(\mathcal{H})=H_i$ as desired. 

Hence, let $g \in \Img(\rho_1 \times \cdots \times \rho_v)$ be conjugate to 
	\[
	\begin{pmatrix}
		0 & 1 \\
		-1 & w_1
	\end{pmatrix} \times \cdots \times \begin{pmatrix}
		0 & 1 \\ -1 & w_v
	\end{pmatrix}.
	\]
Then, $(g,1)$ admits a preimage in $\mathcal{H}$ under $\rho_1\times \cdots \times \rho_v\times \varepsilon$. Therefore, 
 by the Chebotarev Density Theorem, a positive proportion of the primes $p$ satisfy
 $$(\rho_1 \times \cdots \times \rho_v \times \varepsilon)(\Frob) = (g,1).
 $$
  For such primes $p,$ we have
	\[
	a_i(p) \equiv \Tr \begin{pmatrix} 0 & 1 \\ -1 & w_i \end{pmatrix} \equiv w_i \pmod{\mathfrak{p}_{\ell, K}} \quad \text{for}~1\leq i\leq v \quad\text{and}  \quad p \equiv \varepsilon(\Frob) \equiv 1 \pmod{d}. 
	\] 
	
For the more general case $p\equiv r \mod d$, let $\sigma \in \Gal\!\left(\overline{\mathbb{Q}}/\Q\right)$ be such that $\varepsilon(\sigma) = r$. Writing $\rho_i(\sigma) = \begin{psmallmatrix} a_i & b_i \\ c_i & d_i \end{psmallmatrix}$, we define $g=(g_1,\ldots,g_v) \in \Img(\rho_1 \times \cdots \times \rho_v)$ by
\[
g_i := 	\begin{cases} \begin{psmallmatrix}
		0 & 1 \\ {-1}  & \frac{w_i+b_i-c_i}{d_i}
	\end{psmallmatrix} & d_i \neq 0 \\[7pt]
	\begin{psmallmatrix}
	1 & 0 \\ \frac{w_i-a_i}{b_i}  & 1
	\end{psmallmatrix} & d_i= 0.
	\end{cases}
\]
Then, we have $\Tr(\rho_i(\sigma) \cdot g_i) = w_i$ and as before by the Chebotarev Density Theorem we obtain a positive density of primes $p$ with	
	\[
	a_i(p) \equiv\Tr(\rho_i(\sigma) \cdot g_i)\equiv w_i \pmod{\mathfrak{p}_{\ell, K}} \quad \text{for}~1\leq i\leq v \quad\text{and}  \quad p \equiv \varepsilon(\Frob) \equiv r \pmod{d}. 
	\] 

\end{proof}

\section{Proof of Theorem 1.1 and Corollary 1.2}\label{Proofs}

To prove Theorem~\ref{Main-Theorem}, we require the following two lemmas. The first lemma is about the Fourier coefficients of quasimodular forms for which the Fourier coefficients at primes vanish.  

\begin{lemma}\label{lem:rationalbasis}
 Let $f\in \widetilde{M}_{\leq k}$ be such that $a_p(f)=0$ for all primes $p$. Then, $f$ can be written as a $\C$-linear combination of $g_1,\ldots,g_d \in \widetilde{M}_{\leq k}\cap \Q[\![q]\!],$ where $a_p(g_j)=0$ for all  primes $p$.
\end{lemma}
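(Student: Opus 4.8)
The statement is essentially a descent/rationality fact: the $\mathbb{C}$-vector space $V := \{f \in \widetilde{M}_{\le k} : a_p(f) = 0 \text{ for all primes } p\}$ is defined over $\mathbb{Q}$, so it has a $\mathbb{Q}$-rational basis, and moreover that basis automatically consists of forms with vanishing prime-indexed coefficients since that is precisely the defining condition of $V$. The work is entirely in showing $V$ is defined over $\mathbb{Q}$ — i.e., that $V = (V \cap \mathbb{Q}[\![q]\!]) \otimes_{\mathbb{Q}} \mathbb{C}$.

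Let me sketch this. First I would fix the monomial basis $\{G_2^a G_4^b G_6^c : 2a + 4b + 6c \le k\}$ of $\widetilde{M}_{\le k}$, which consists of forms with \emph{rational} Fourier coefficients (the $G_{2k}$ have rational coefficients by \eqref{Eisenstein}, since $B_{2k} \in \mathbb{Q}$ and $\sigma_{2k-1}(n) \in \mathbb{Z}$). Write $N := \dim \widetilde{M}_{\le k}$; via this basis, identify $\widetilde{M}_{\le k} \cong \mathbb{C}^N$ and the rational subspace $\widetilde{M}_{\le k} \cap \mathbb{Q}[\![q]\!] \cong \mathbb{Q}^N$. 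For each prime $p$, the functional $f \mapsto a_p(f)$ is a $\mathbb{C}$-linear map on $\mathbb{C}^N$ whose matrix (a single row) has \emph{rational} entries, since $a_p(G_2^a G_4^b G_6^c) \in \mathbb{Q}$. Hence $V$ is the common kernel of a (possibly infinite) family of $\mathbb{Q}$-linear functionals on $\mathbb{C}^N$; by finite-dimensionality, finitely many of them cut out $V$, say $a_{p_1}, \dots, a_{p_m}$, so $V = \ker A$ for an $m \times N$ matrix $A$ with entries in $\mathbb{Q}$. A matrix with rational entries has its kernel spanned by rational vectors (e.g. by Gaussian elimination over $\mathbb{Q}$, or by noting $\dim_{\mathbb{Q}} \ker_{\mathbb{Q}^N} A = N - \operatorname{rank} A = \dim_{\mathbb{C}} \ker_{\mathbb{C}^N} A$). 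Choose such a rational basis $g_1, \dots, g_d$ of $\ker A$; translating back through the monomial basis, each $g_j \in \widetilde{M}_{\le k} \cap \mathbb{Q}[\![q]\!]$. Finally, since $g_j \in V$, we have $a_p(g_j) = 0$ for \emph{all} primes $p$ (not just $p_1, \dots, p_m$), because $V$ was defined by vanishing at all primes; and $\{g_j\}$ spans $V \ni f$ over $\mathbb{C}$, giving the claimed expression for $f$.

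**The main obstacle.** There is really no deep obstacle here — the only point needing care is the reduction from the infinite family of conditions $\{a_p = 0 : p \text{ prime}\}$ to a finite subfamily, which is immediate from the finite-dimensionality of $\widetilde{M}_{\le k}$ (a nested decreasing sequence of subspaces of a finite-dimensional space stabilizes). One should also be slightly careful that the chosen rational basis of $\ker A$ genuinely lies in $\widetilde{M}_{\le k}$, not merely in the abstract $\mathbb{Q}^N$: this is automatic because the identification $\widetilde{M}_{\le k} \cap \mathbb{Q}[\![q]\!] \cong \mathbb{Q}^N$ is via a basis (the $G_2^aG_4^bG_6^c$) whose elements have rational $q$-expansions, so rational coordinate vectors correspond exactly to quasimodular forms with rational coefficients. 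Everything else is linear algebra over $\mathbb{Q}$ versus $\mathbb{C}$.
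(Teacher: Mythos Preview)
Your proof is correct, and takes a somewhat different (though equally elementary) route from the paper. The paper does not introduce the subspace $V$ at all; instead it writes $f=\sum_i \alpha_i h_i$ in the rational monomial basis, chooses a $\Q$-basis $\beta_1,\dots,\beta_d$ of the $\Q$-span of the complex scalars $\alpha_1,\dots,\alpha_m$, and regroups to get $f=\sum_j \beta_j g_j$ with $g_j\in\Q[\![q]\!]$; the vanishing $a_p(g_j)=0$ then drops out of the $\Q$-linear independence of the $\beta_j$. So the paper decomposes the \emph{coefficients} of a single $f$, while you show the whole space $V$ is $\Q$-defined by observing its defining linear functionals are rational. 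Your argument buys a uniform rational basis of $V$ independent of $f$, at the cost of the extra (easy) step of reducing the infinitely many prime conditions to finitely many; the paper's argument avoids that reduction entirely but produces $g_j$ tailored to each $f$.
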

\begin{proof}
	Let $f$ be as above. Since $\widetilde{M}_{\leq k}$ admits a basis $h_1,h_2,\ldots, h_m$ of forms with rational Fourier coefficients (monomials in the Eisenstein series $G_2, G_4$ and $G_6$), we write 
	\[
	f=\sum_{i}^m \alpha_i h_i \qquad \text{for some}~~ \alpha_i\in \C.
	\]
	
	Let $V_f$ be the vector space consisting of $\Q$-linear combinations of $\alpha_1,\alpha_2,\cdots, \alpha_m$. Writing $\beta_1,\ldots,\beta_d$ for a basis of this vector space, and $\alpha_i=\sum_{j} \alpha_{i,j} \beta_j$ with $\alpha_{i,j}\in \Q$, we obtain
	\[
	f=\sum_{i,j} \alpha_{i,j}\beta_j h_i = \sum_{j} \beta_j g_j, \qquad \text{where} \qquad g_j= \sum_{i} \alpha_{i,j} h_i.\]
	Note that the $g_j$'s are mixed weight quasimodular forms with rational Fourier coefficients. Now, since $f$ is prime-detecting, for all primes $p$ we have that
	\[
	\sum_{j} \beta_j\,a_{g_j}(p) = a_{f}(p) =  0.\]
	Since the $\beta_j$'s form a basis of $V_f$ and $a_{g_j}(p)\in \Q$, this implies that $a_{g_j}(p)=0$ for all primes $p$. Hence, we prove the lemma. 
\end{proof}
Our next lemma is standard and closely resembles the previous one, though it holds in greater generality. We include a proof for the sake of completeness.

\begin{lemma}\label{lem:Kcoef}
Let $F\subset\mathbb C$ be a subfield, and let
$h_1,\dots,h_n\in\widetilde M\cap F[\![q]\!]$ be linearly independent over
$\mathbb C$. Suppose that
\[
f=\sum_{i=1}^n \alpha_i h_i \in F[\![q]\!]
\]
for some $\alpha_i\in\mathbb C$. Then $\alpha_i\in F$ for all $i$.
\end{lemma}

\begin{proof} 
Let $V_f$ be a vector space consisting of $F$-linear combinations of the $\alpha_i$'s and all the elements of $F$. Writing $\beta_0=1, \beta_1,\ldots,\beta_d$ for a basis of this vector space, and $\alpha_i=\sum_{j} \alpha_{i,j} \beta_j$ with $\alpha_{i,j}\in F$, we obtain
$$ f=\sum_{i} \alpha_i h_i = \sum_{i,j} \alpha_{i,j} \beta_j h_i = g_0 + \sum_{j>0} \beta_j g_j,  \qquad \text{where} \qquad g_j= \sum_{i} \alpha_{i,j} h_i.$$
Note that the $g_j$ are mixed weight quasimodular forms with Fourier coefficients in $F$. Also, since the $\beta_j$'s form a basis for $V_f$ and $\beta_0=1$, we have that $\beta_j\not\in F$ for $j>0$. Hence, all the non-zero Fourier coefficients of $\sum_{j>0} \beta_j g_j$ do not lie in $F$.  However, since the coefficients of $f$ lie in $F$, we conclude that
$\sum_{j>0}\beta_j g_j = 0$, and so $f = g_0$. Therefore, we have
\[
\sum_{i=1}^n \alpha_i h_i = f = g_0 = \sum_{i=1}^n \alpha_{i,0} h_i,
\]
and hence
\[
\sum_{i=1}^n (\alpha_i - \alpha_{i,0}) h_i = 0.
\]
By the assumed $\C$-linear independence of $h_1,\dots,h_n$, we obtain
$\alpha_i - \alpha_{i,0} = 0$ for all $i$, so $\alpha_i = \alpha_{i,0} \in F$.
This completes the proof.
\end{proof}

Armed with lemmas, we are now in a position to prove Theorem~\ref{Main-Theorem}.

\begin{proof}[Proof of Theorem~\ref{Main-Theorem}] 	Suppose $f(\tau)=\sum_{n=0}^{\infty} a_f(n)q^n$ is a prime-detecting quasimodular form of mixed weight $\leq 2k$. By the decomposition theorem of quasimodular forms (for example, see (12) of  \cite{Grabner}), we have
	\begin{equation}\label{eq:qmfdecompositoin}
	\widetilde{M}_{2k} \,=\, \bigoplus_{r=0}^{k-1}\C D^{r}G_{2k-2r}\oplus \bigoplus_{r=0}^{k-1} D^rS_{2k-2r},
	\end{equation}
    where $S_{2k}$ is the space of cusp forms of level 1 and weight $2k$. 
   Since $f$ has algebraic Fourier coefficients, they lie in some number
field $K_0\subset\C$. Enlarging $K_0$ if necessary, we may assume that
$K_0$ contains the coefficient fields of all normalized cuspidal Hecke
eigenforms appearing in the decomposition (\ref{eq:qmfdecompositoin}). Therefore, $f$ and all the
forms $D^sG_m$ and $D^s f_{m,j}$ lie in $\widetilde M\cap K_0[[q]]$.
We now apply Lemma~\ref{lem:Kcoef} with $F=K_0$.

    By Lemma~\ref{lem:rationalbasis}, we can assume without loss of generality that $f$ has rational Fourier coefficients.
Therefore, we can write
	\[
	f = \sum_{s=0}^{k-1}  \sum_{m=2}^{2(k-s)} \alpha_{m,s} D^sG_{m} + \sum_{s=0}^{k-1}  \sum_{m=2}^{2(k-s)}  D^s g_{m,s},
	\]
	with coefficients $\alpha_{m,s}$ and where the $g_{m,s}$ are cuspidal modular forms of pure weight $m$ and level $1$. We write
	\[ g_{m,s} = \sum_{j} \beta_{m,s,j} f_{m,j}\]
	as sums of normalized cuspidal Hecke eigenforms of weight~$m$.
	Let $K$ be the compositum of the coefficient fields $K_{f_{m,j}}$.
All forms $D^sG_m$ and $D^s f_{m,j}$ occurring in the decomposition
of $f$ lie in $\widetilde M\cap K[[q]]$, and by the direct-sum
decomposition (\ref{eq:qmfdecompositoin}) they are linearly independent over $\mathbb C$.
 Lemma~\ref{lem:Kcoef} then implies that all coefficients $\alpha_{m,s}$ and $\beta_{m,s,j}$ are $K$-rational. As any multiple of a prime-detecting quasimodular form is prime-detecting, multiplying by a common denominator and dividing by the content, we may assume without loss of generality that the coefficients lie in~$\mathcal{O}_K$ and generate the unit ideal~$(1)$; that is, they are primitive in~$\mathcal{O}_K$. Now, by writing 
	\[h_{m,0}(x) = \sum_{s=0}^{k-1} \alpha_{m,s} x^s\in \mathcal{O}_K[x], \qquad h_{m,j} = \sum_{s=0}^{k-1} \beta_{m,s,j} x^s \in \mathcal{O}_K[x],\]
	we express our prime-detecting form $f$ as
	\[
	f=\sum_{m=2}^k h_{m,0}(D) G_m + \sum_{m=2}^k\sum_{j} h_{m,j}(D) f_{m,j}.
	\]

We now analyze this expression. The Eisenstein series has as coefficients divisor sums, so $a_{G_{m}}(p) = p^{m-1} + 1,$ 
 Therefore, the Fourier coefficients of $f$ with prime exponent $p$ are
\[
a_f(p) = h(p)+\sum_{m=2}^k \sum_{j} h_{m,j}(p) a_{f_{m,j}}(p) \qquad \text{with} \qquad h(x) = \sum_{m=2}^k h_{m,0}(x)(x^{m}+1).
\]
If all polynomials $h_{m,j}$ are the zero polynomial, the proof is complete since there is no cuspidal part. Otherwise, we fix a prime $\ell$ satisfying all of the properties $(B_1)-(B_4)$ and so that $\ell$ is larger than the degrees of all the polynomials $h_{m,j}$. Then, we let $0<r<\ell$ be such that there is a polynomial $h_{\mu,\iota}(x)$ which does not vanish at $x\equiv r \pmod \ell$. Since the polynomials admit less than $\ell$ different roots such an $r$ exists. 

 We apply Lemma~\ref{Main-Lemma} to the cusp forms $f_{m,j}$ with $d = \ell$. Therefore, we obtain a positive proportion of primes $p \equiv r \pmod \ell$
	that satisfy 
	\begin{align*}
	a_{f_{\mu,\iota}}(p) &\equiv h_{\mu,\iota}(r)^{-1}(1-h(r)) \pmod{\mathfrak{p}_{\ell, K}} \\
	a_{f_{m,j}}(p) &\equiv 0 \pmod{\mathfrak{p}_{\ell,K}} \qquad \text{ for all } (m,j)\neq (\mu,\iota).
	\end{align*}
	For those primes we thus find
	\[
	a_{f}(p) =h(p)+ h_{\mu,\iota}(p)  h_{\mu,\iota}(r)^{-1} (1-h(r)) = h(r)+ h_{\mu,\iota}(r)  h_{\mu,\iota}(r)^{-1} (1-h(r)) = 1  \pmod{\mathfrak{p}_{\ell,K}}. 
	\]
	Here, we used that the prime ideal $\mathfrak{p}_{\ell, K}$ of $\mathcal{O}_K$ lies above the rational prime $\ell$, i.e.\ $\mathfrak{p}_{\ell, K} \cap \Z = (\ell)$. In particular,  $h(p) \equiv h(r) \pmod{\mathfrak{p}_{\ell,K}}$. 
	Thus, $f$ does not detect primes. This is a contradiction, and so the cuspidal part of $f$ must vanish. 
Thus, every prime-detecting
quasimodular form $f\in\Omega$ has trivial cuspidal part, i.e.\ $f\in \mathcal{E}$.
This proves that $\Omega\subset \mathcal{E}$.
\end{proof}

\begin{proof}[Proof of Corollary~\ref{Main-Corollary}] Theorem~11 of \cite{CVO2024} characterizes $\mathcal{E}\cap \Omega$ in terms of the $D^nH_k,$ where $n\geq 0$ and $k\geq 6.$ Therefore, the corollary follows as we have that $\Omega=\mathcal{E}\cap \Omega.$
\end{proof}


\begin{thebibliography}{99}

\bibitem{BachmannMA} H.~Bachmann, \emph{Multiple Zeta-Werte und die Verbindung zu Modulformen durch Multiple Eisensteinreihen}, Masterarbeit in Mathematik, Universit\"at Hamburg, 2012.

\bibitem{Craig} W. Craig, \emph{Higher level $q$-multiple zeta values with applications to quasimodular forms and partitions}. Preprint, arxiv.org/abs/2409.13874.


\bibitem{CVO2024} W. Craig, J-W. Van-Ittersum, and K. Ono, \emph{Integer partitions detect the primes}, Proc. Natl. Acad. Sci. USA \textbf{121} (39) (2024).

\bibitem{Deligne} P. Deligne, \emph{Formes modulaires et re\'presentations $\ell$-adiques}, Seminaire Bourbaki \textbf{355} (1969).

\bibitem{D-S} P. Deligne and J-P. Serre, \emph{Formes modulaires de poids 1}, Ann. Sci. \'Ecole Normale Sup. $4^{e}$ s\'er, \textbf{7} (1974), 507--530.

\bibitem{Gomez} K. Gomez, \emph{MacMahonesque partition functions detect sets related to primes}, Arch. Math. \textbf{124} (2025), 637--652. 

\bibitem{Grabner} P. J. Grabner, \emph{Asymptotic expansions for the coefficients of extremal quasimodular forms and a conjecture of Kaneko and Koike}, Ramanujan J. \textbf{57} (2022), 1021--1041. 


\bibitem{Kane-etal} B. Kane, K. Krishnamoorthy, and Y.-K. Lau,
\emph{On a conjecture about prime-detecting quasimodular forms}, Research in the Mathematical Sciences, accepted for publication (arxiv.org/abs/2507.00147).

\bibitem{Kaneko-Zagier} M. ~Kaneko and D. ~Zagier,
\emph{A generalized Jacobi theta function and quasimodular forms},
The moduli space of curves (Texas Island, 1994), Progr. Math., vol. \textbf{129}, Birkh\"auser Boston, Boston, MA, 1995, 165--172.

\bibitem{KMS} S.-Y. Kang, T. Matsusaka, and G. Shin, \emph{Quasimodularity in MacMahon partition variants and prime detection}, Ramanujan J. {\bf 67} no. 81 (2025).

\bibitem{Lelievre} S. Leli\`evre, \emph{Quasimodular forms with Fourier coefficients zero at primes}, unpublished note.

\bibitem{MacMahon} P. A. ~MacMahon, \emph{Divisors of Numbers and their Continuations in the Theory of Partitions}, Proc. London Math. Soc. (2) \textbf{19} (1920), no.1, 75--113 
[also in Percy Alexander MacMahon Collected Papers, Vol.2, pp. 303--341 (ed. G.E. Andrews), MIT Press, Cambridge, 1986].

\bibitem{CBMS}  K. Ono,
 \emph{The web of modularity: arithmetic of the coefficients of modular forms and $q$-series}, CBMS Regional Conference Series in Mathematics, \textbf{102}, Amer. Math. Soc., Providence, RI, 2004.
 
\bibitem{Ono-Skinner} K. Ono and C. Skinner,  \emph{Fourier Coefficients of Half-Integral Weight Modular Forms Modulo $\ell$}, Annals of Mathematics, \textbf{147} (2) (1998), 453--470.

\bibitem{Ribet1} K. Ribet, \emph{On $\ell$-adic representations attached to modular forms}, Invent. Math. \textbf{28} (1975), 245--275.

\bibitem{Ribet3} K. Ribet. \emph{Galois representations attached to eigenforms with Nebentypus,} \textbf{601} of Springer Lecture Notes in Mathematics, pages 17--52. Springer-Verlag, 1977.

\bibitem{Ribet2} K. Ribet, \emph{On $\ell$-adic representations attached to modular forms II}, Glasgow Math. J. \textbf{27} (1985), 185--194.

\bibitem{Serre} J-P. Serre, \emph{Propri\'et\'es galoisiennes des points d'ordre fini des courbes elliptiques}, Invent. Math. \textbf{15} (1972), 259--331.
 







\end{thebibliography}
\end{document}